\newcommand{\field}[1]{\mathbb{#1}}
\newcommand{\R}{\field{R}}
\newcommand{\RH}{\field{RH}_\infty}
\newcommand{\C}{\field{C}}
\newcommand{\N}{\field{N}}
\newcommand{\Z}{\field{Z}}
\newtheorem{theorem}{Theorem}
\newtheorem{lemma}{Lemma}
\newtheorem{definition}{Definition}
\newtheorem{conjecture}{Conjecture}
\newtheorem{proposition}{Proposition}
\newcommand*{\rom}[1]{\expandafter\@slowromancap\romannumeral #1@}
\title{Construction of Periodic Counterexamples to \\ the Discrete-Time Kalman Conjecture}
\author{Peter Seiler and  Joaquin Carrasco% <-this % stops a space
	\thanks{P. Seiler is with the Department of Electrical Engineering \& Computer Science, University of Michigan, Ann Arbor, US.
		{\tt\small pseiler@umich.edu}}
	\thanks{J. Carrasco is with the Department of Electrical $\&$ Electronic Engineering, University of Manchester, M13 9PL, UK. 
		{\tt\small joaquin.carrasco@manchester.ac.uk}  
	}
\thanks{This work was partially supported by EPSRC project EP/S03286X/1.}
}
\begin{document}

\maketitle

\begin{abstract}
  This paper considers the Lurye system of a discrete-time, linear
  time-invariant plant in negative feedback with a nonlinearity.  Both
  monotone and slope-restricted nonlinearities are considered.  The
  main result is a procedure to construct destabilizing nonlinearities
  for the Lurye system. If the plant satisfies a certain phase
  condition then a monotone nonlinearity can be constructed so that
  the Lurye system has a non-trivial periodic cycle.  Several examples
  are provided to demonstrate the construction. This represents a
  contribution for absolute stability analysis since the constructed
  nonlinearity provides a less conservative upper bound than existing
  bounds in the literature.
\end{abstract}

%\begin{IEEEkeywords}
% Absolute stability; Lurye system; Kalman Conjecture; Cycle behaviour. 
%\end{IEEEkeywords}

\section{Introduction}

The discrete-time absolute stability problem considers the Lurye
system of a discrete-time, linear time-invariant (LTI) plant in
negative feedback with a nonlinearity. Let $k_{AS}$ denote the
supremum of the set of values of $k$ for which the Lurye system is
stable for all nonlinearities whose slope is restricted to $[0,k]$. It
remains an open question to provide necessary and sufficient
conditions to compute this maximal stability interval $k_{AS}$.  The
LTI Zames--Falb
multipliers~\cite{OShea67,OShea:1967,Zames:1968,Willems:1968,Willems:71,Carrasco:2016}
provide a sufficient condition for stability. Specifically, the search
over discrete-time Zames-Falb multipliers in~\cite{Carrasco:20}
provides a lower bound $k_{ZF} \le k_{AS}$.  It has been conjectured
in~\cite{Shuai:2018,Zhang:20} that this condition is actually
necessary and sufficient, i.e. $k_{ZF}=k_{AS}$.  In other words, the
conjecture is that if a Zames-Falb multiplier does not exist for some
$k$ then there exists a destabilizing nonlinearity whose slope remains
within $[0,k]$.

The main contribution of this paper is a method to systematically
construct destabilizing nonlinearities for the Lurye system. Such
nonlinearities provide upper bounds $\bar{k} \ge k_{AS}$ and hence are
complementary to the Zames-Falb conditions. The construction is based
on a frequency-domain condition developed in~\cite{Zhang:20} from the
dual problem of the Zames-Falb condition. The construction is first described
for Lurye systems with monotone nonlinearities
(Section~\ref{sec:mainS0inf}). If the plant satisfies a phase
condition at one frequency then there is a monotone nonlinearity such
that the Lurye system has a non-trivial periodic solution.  The
destabilizing nonlinearity is explicitly constructed from the periodic
solution.  Next, the results are extended to Lurye systems with
slope-restricted nonlinearities via a loop transformation
(Section~\ref{sec:mainS0k}).

The only existing method to systematically construct a destabilizing
nonlinearity is, to our knowledge, given by the Nyquist criterion.
This provides the smallest linear gain, referred to as the Nyquist
gain $k_N$, that destabilizes the Lurye system
(Section~\ref{Sec:Kal}). The Nyquist gain provides another upper bound
$k_N \ge k_{AS}$ but it is known that this upper bound is
conservative. Specifically, the discrete-time Kalman conjecture is
that $k_N=k_{AS}$.  This conjecture was shown to be false in
\cite{Carrasco:15,Heath:15} and hence $k_N>k_{AS}$ in general.  Our
paper constructs destabilizing nonlinearities with slope restricted to
$[0,\bar{k}]$.  If $\bar{k}<k_N$ then the destabilizing nonlinearity
represents a counterexample to the Kalman conjecture.

It is worth noting that the construction of counterexamples of the
continuous-time Kalman Conjecture has been investigated since the
sixties.  It still attracts interest due to the ill-posed numerical
issues~\cite{Fitts:66,Barabonov:88,bragin:2011,Leonov:2013}.  For the
Aizerman conjecture, a systematic analysis of the existence of periodic cycles for second-order systems has been explored
in~\cite{Zvyagintseva:20a,Zvyagintseva:20b}. In the context of optimization, construction of nonlinearities for worst-case convergence rate has been used in~\cite{Lee:20}.

% Frequency domain conditions have been developed in \cite{Shuai:2018,Zhang:20} for the non-existence of a Zames-Falb multiplier.  However, there is no link between these frequency domain conditions and the existence of a destabilizing nonlinearity.

%This paper uses the frequency domain conditions in~\cite{Zhang:20} to construct destabilizing nonlinearities.  First, a Lurye system is considered with monotone nonlinearities.  If the plant satisfies a phase condition at one frequency then there is a monotone nonlinearity such that the Lurye system has a non-trivial periodic solution.  The destabilizing nonlinearity is explicitly constructed from the periodic solution.  Next, the results are extended to Lurye systems with slope-restricted nonlinearities.  A destabilizing nonlinearity is constructed with slope restricted to $[0,\bar{k}]$.  This construction is complementary to the Zames-Falb condition.  Specifically, the Zames-Falb condition provides a $k_{ZF}$ such that the Lurye system is stable for all nonlinearities with slope in the interval $[0,k_{ZF}]$. The destabilizing nonlinearity demonstrates that $k_{ZF} \le \bar{k}$.

\section{Notation}

The set of integers and positive, natural numbers are denoted as $\Z$
and $\N^+$, respectively.  $\RH$ denotes the space of real, rational
functions with all poles inside the open unit disk.  This space
corresponds to transfer functions for stable, LTI discrete-time
systems.  A function $\phi:\R\rightarrow \R$ has slope restricted to
$[0,k]$ for some finite $k> 0$ if:
\begin{align}
  0 \le \frac{ \phi(y_2) - \phi(y_1)}{y_2 - y_1} \le k
 \quad \forall y_2 \ne y_1
\end{align}
$S_{0,k}$ with $k<\infty$ denotes the set of all functions with slope
restricted to $[0,k]$ The notation $S_{0,k}$ with $k=\infty$
corresponds to the special case where $\phi$ is multivalued and
monotone: $y_2 \ge y_1$ implies $\phi(y_2) \ge \phi(y_1)$.  In this
case, $u \in \phi(y)$ will denote that $u$ is one of the values taken
by $\phi$ at $y$.  In addition, $S_{0,k}^{\text{odd}}$ denotes the set
of all odd functions with slope restricted to $[0,k]$,
i.e. $\phi(x)=-\phi(-x)$ for all $x\in\R$.

Finally, let $\{h_0,h_1,\ldots,h_{T-1}\}$ denote a finite sequence of
real numbers. We will often stack such sequences into a column vector
$H_T:=[h_0, \, h_1, \ldots, \, h_{T-1}]^\top \in \R^T$. The circulant
matrix for a given finite sequence $H_T$ is defined as:
\begin{equation}\label{eq:1}
C(H_T):=\begin{bmatrix}
h_0 & h_{T-1} & h_{T-2} & \cdots & h_2 & h_1\\
h_1 & h_0 & h_{T-1} & \cdots & h_3 & h_2\\
h_2 & h_1 & h_0 & \cdots & h_4 & h_3\\
\vdots & \vdots & \vdots & \ddots & \vdots & \vdots\\
h_{T-2} & h_{T-3} & h_{T-4} & \cdots & h_0 & h_{T-1}\\
h_{T-1} & h_{T-2} & h_{T-3} & \cdots & h_1 & h_0\\
\end{bmatrix}.
\end{equation} 

\section{Problem statement}

Let $G$ be a discrete-time system that is LTI and single-input,
single-output (SISO).  We consider the Lurye system of $G$ in negative
feedback with a nonlinearity $\phi:\R\rightarrow \R$ as shown in
Figure~\ref{fig:lure}. The Lurye system is expressed as
\begin{equation}\label{eq:Lure}
y=Gu, \quad u_k=-\phi(y_k),
\end{equation}
We consider functions in $S_{0,k}$ with $k>0$ and $\phi(0)=0$.  Lurye
systems with both $k<\infty$ and $k=\infty$ will be considered. These
cases are related by a loop transformation as discussed later in the
paper.  Additional details on this formulation can be found in
\cite{D&V:75}.

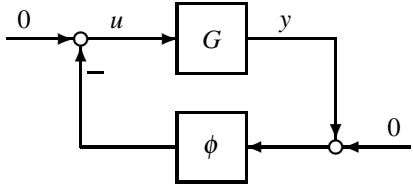
\begin{figure}[ht]
  \centering
  \ifx\JPicScale\undefined\def\JPicScale{0.3}\fi
\unitlength \JPicScale mm
\begin{picture}(180,75)(0,-65)
\thicklines
\put(0,0){\vector(1,0){30}}
\put(33,0){\circle{6}}
\put(5,5){0}

\put(36,0){\vector(1,0){40}}
\put(46,5){$u$}
\put(76,-15){\framebox(30,30){$G$}}

\put(106,0){\line(1,0){40}}
\put(121,5){$y$}
\put(146,0){\vector(0,-1){45}}
\put(146,-48){\circle{6}}

\put(179,-48){\vector(-1,0){30}}
\put(169,-43){0}
\put(143,-48){\vector(-1,0){37}}
\put(76,-63){\framebox(30,30){$\phi$}}

\put(76,-48){\line(-1,0){43}}
\put(33,-48){\vector(0,1){45}}
\put(36,-15){\line(1,0){7}}

% %\linethickness{0.3mm}
% \put(10,2.5){\line(1,0){10}}
% \put(10,2.5){\line(0,1){7.5}}
% \put(10,10){\vector(0,1){0.12}}
% \put(10,12.5){\circle{5}}

% %\linethickness{0.5mm}
% %\put(12,15){\line(1,0){2}}
% \linethickness{0.3mm}
% \put(0,12.5){\line(1,0){7.5}}
% \put(7.5,12.5){\vector(1,0){0.12}}
% \put(12.5,12.5){\line(1,0){7.5}}
% \put(20,12.5){\vector(1,0){0.12}}
% \put(30,12.5){\line(1,0){10}}
% \put(40,5){\line(0,1){7.5}}
% \put(40,5){\vector(0,-1){0.12}}

% \put(40,2.5){\circle{5}}
% \put(42.5,2.5){\line(1,0){7.5}}
% \put(42.5,2.5){\vector(-1,0){0.12}}
% \put(30,2.5){\line(1,0){7.5}}
% \put(30,2.5){\vector(-1,0){0.12}}

% \put(20,10){\line(1,0){10}}
% \put(20,15){\line(1,0){10}}
% \put(20,10){\line(0,1){5}}
% \put(30,10){\line(0,1){5}}

% \put(20,0){\line(1,0){10}}
% \put(20,5){\line(1,0){10}}
% \put(20,0){\line(0,1){5}}
% \put(30,0){\line(0,1){5}}

% \put(25,12.5){\makebox(0,0)[cc]{$G$}}
% \put(25,2.5){\makebox(0,0)[cc]{$\phi$}}

% \put(3,14){\makebox(0,0)[cc]{$0$}}
% \put(15,14){\makebox(0,0)[cc]{$u$}}
% \put(35,14){\makebox(0,0)[cc]{$y$}}
% \put(12.5,9){\makebox(0,0)[cc]{$-$}}
% \put(47,4){\makebox(0,0)[cc]{$0$}}
% %\put(35,4){\makebox(0,0)[cc]{$e_2$}}
% %\put(15,4){\makebox(0,0)[cc]{$h$}}
% %\put(35,14){\makebox(0,0)[cc]{$y$}}
\end{picture}
  \caption{Autonomous Lurye system}
  \label{fig:lure}
  \vskip-3mm
\end{figure}

We provide conditions on $G$ for the existence of non-trivial periodic
solutions to the Lurye system in Figure~\ref{fig:lure}.  Specifically,
let the plant $G$, slope constant $k>0$, and time horizon $T\in \N^+$
be given.  We provide sufficient conditions for the existence of a
nonlinearity $\phi \in S_{0,k}$ with $\phi(0)=0$ such that the Lurye
system has a non-trivial $T$-periodic solution.  If the conditions are
feasible then the proof provides a construction for the periodic
signals $U_T\in \R^T$ and $Y_T \in \R^T$.  A nonlinearity
$\phi \in S_{0,k}$ can then be constructed to interpolate $(Y_T,-U_T)$
and $(0,0)$.

\section{Preliminary Results}

This section presents two preliminary results that are used
in the derivation of the main results.

% XXX - I guess delta = 2pi also satisfies the equation 4 and
% hence delta <= pi/T is not necessary (due to periodicity). We
% need to state that we are restricting the range of delta.
% Also, I think eq:Lem1 always holds for T=1 so we need to assume T>=2.
\vspace{0.05in}
\begin{lemma}\label{lem:1}
  Let $T\in\N^+$ be given.  Then $\delta \in [-\pi,\pi]$ satisfies
  $|\delta| \leq \pi/T$ if and only if
  \begin{align}
    \label{eq:Lem1a}
    \hbox{Re}\{e^{j\delta}e^{j(\frac{ \pi}{T}k+\frac{\pi}{2})}\}
    \hbox{Re}\{e^{j(\frac{\pi}{T}k+\frac{\pi}{2})}\}\geq 0,
    \quad \mbox{for } k\in \Z.
  \end{align}
\end{lemma}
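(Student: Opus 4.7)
The plan is to first simplify the trigonometric expression in \eqref{eq:Lem1a}. Using $e^{j\pi/2}=j$ and taking real parts gives
\begin{align*}
\hbox{Re}\{e^{j(\frac{\pi k}{T}+\frac{\pi}{2})}\} &= -\sin(\pi k/T),\\
\hbox{Re}\{e^{j\delta}e^{j(\frac{\pi k}{T}+\frac{\pi}{2})}\} &= -\sin(\pi k/T+\delta),
\end{align*}
so that \eqref{eq:Lem1a} is equivalent to $\sin(\pi k/T)\sin(\pi k/T+\delta)\ge 0$ for every $k\in\Z$. Because $\sin(\pi k/T)$ is $2T$-periodic in $k$, it suffices to verify the inequality for $k\in\{0,1,\dots,2T-1\}$; the factor $\sin(\pi k/T)$ vanishes at $k=0$ and $k=T$, is strictly positive for $k=1,\dots,T-1$, and strictly negative for $k=T+1,\dots,2T-1$.

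For the forward implication, I assume $|\delta|\le\pi/T$. At $k=0$ and $k=T$ the product is zero. For $k\in\{1,\dots,T-1\}$ the grid point $\pi k/T$ lies in $[\pi/T,\pi-\pi/T]$, i.e.\ at distance at least $\pi/T$ from the nearest zero of sine in $\{0,\pi\}$; adding any $\delta$ with $|\delta|\le\pi/T$ therefore keeps $\pi k/T+\delta$ inside $[0,\pi]$, so $\sin(\pi k/T+\delta)\ge 0$ and the product is nonnegative. The same argument applied on $(\pi,2\pi)$ handles $k\in\{T+1,\dots,2T-1\}$.

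For the converse, I would argue by contrapositive. If $\delta>\pi/T$, take $k=T-1$, so that $\pi k/T=\pi-\pi/T$ and $\sin(\pi k/T)>0$; then $\pi k/T+\delta\in(\pi,2\pi-\pi/T]$ implies $\sin(\pi k/T+\delta)<0$, producing a strictly negative product. The symmetric choice $k=1$ rules out $\delta<-\pi/T$.

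The main delicacy is careful bookkeeping at the boundary $\delta=\pm\pi/T$ (where the product is exactly zero rather than strictly positive, so the forward argument needs closed rather than open intervals) and verifying that the degenerate case $T=1$ — in which $\sin(\pi k/T)\equiv 0$ makes both sides of the equivalence vacuously true — does not break the grid argument. Both are routine but need to be written out explicitly.
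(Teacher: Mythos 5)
Your proof is correct and takes essentially the same approach as the paper's: both reduce condition~\eqref{eq:Lem1a} via $2T$-periodicity to finitely many $k$, fix the sign of the $\delta$-free factor (the paper notes $\hbox{Re}\{z_k\}<0$ for $k=1,\ldots,T-1$, which is your $\sin(\pi k/T)>0$), and turn the remaining inequality into a phase-interval condition whose intersection over $k$ is exactly $|\delta|\le\pi/T$. Your sine reformulation and explicit contrapositive at $k=T-1$ and $k=1$ are cosmetic variants of the paper's chain of equivalences, and you flag the degenerate case $T=1$ just as the paper does.
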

\begin{proof}
  The result is trivially true for the case $T=1$, hence the rest of
  the proof considers the case $T\ge2$. To simplify notation, define
  $z_k:=e^{j(\frac{\pi}{T}k+\frac{\pi}{2})} \in \C$.  The sequence
  $z_k$ has period $2T$ with $z_0=0$ and $z_k=-z_{k+T}$. Thus
  Equation~\ref{eq:Lem1a} is equivalent to:
  \begin{align}
    \label{eq:Lem1b}
    \hbox{Re}\{e^{j\delta} z_k \} \hbox{Re}\{z_k \}\geq 0
    \quad \mbox{for } k=1,2,\ldots, T-1
  \end{align}
  The phase of $\{z_k\}_{k=1}^{T-1}$ ranges from $\pi/2+\pi/T$ up to
  $3\pi/2-\pi/T$. Hence all values of $\{z_k\}_{k=1}^{T-1}$ have
  strictly negative real part.  It follows that
  Equation~\ref{eq:Lem1b} is equivalent to:
  $\hbox{Re}\{e^{j\delta}z_k\}\leq 0$ for $k=1,2,\ldots,T-1$.
  This can be written as the following inequality on the phase:
  \begin{align}
    \frac{\pi}{2} \le \frac{\pi}{T}k+\frac{\pi}{2}+\delta \le \frac{3\pi}{2}
    \quad \mbox{for } k=1,2,\ldots, T-1
  \end{align}
  Thus Equation~\ref{eq:Lem1a} holds if and only
  if (restricting $\delta \in [-\pi,\pi]$):
  \begin{align}
    -\frac{\pi}{T}k \le \delta \le \pi -\frac{\pi}{T}k 
    \quad \mbox{for } k=1,2,\ldots, T-1
  \end{align}
  This condition is equivalent to $|\delta|\le \pi/T$.
\end{proof}	
\vspace{0.05in}

The next result provides a necessary and sufficient condition to 
interpolate finite sequences by a multi-valued function in
$S_{0,\infty}$. This result appears in Section 8 of 
\cite{lambert04} and more general finite interpolation results
appear in \cite{taylor16} and \cite{taylor17phd}.

\vspace{0.05in}
\begin{lemma}[\cite{lambert04}]
  \label{lem:2}
  Let finite sequences $\{y_i\}_{i=0}^{T-1}$ and $\{u_i\}_{i=0}^{T-1}$
  be given.  There exists $\phi\in S_{0,\infty}$ such that
  $-u_i \in \phi(y_i)$ for $i=0,\ldots,T-1$ if and only if:
  \begin{align}
    \label{eq:monotonecond}
    (y_i - y_l)  (u_i - u_l) \le 0
    \quad \forall i,l \in \{0,\ldots, T-1\}
  \end{align}
\end{lemma}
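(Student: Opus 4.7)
The plan is to prove the two directions of the equivalence separately, with necessity being an immediate consequence of monotonicity and sufficiency requiring an explicit construction of a piecewise-linear interpolant.

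For necessity, I would assume there exists $\phi\in S_{0,\infty}$ with $-u_i \in \phi(y_i)$ for all $i$. For any pair of indices $i,l$, WLOG take $y_i \ge y_l$. Monotonicity of $\phi$ then forces $-u_i \ge -u_l$, i.e.\ $u_l \ge u_i$. Multiplying the two sign conditions gives $(y_i-y_l)(u_i-u_l)\le 0$, which is \eqref{eq:monotonecond}. This is immediate and should take only a sentence or two.

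For sufficiency, the construction is the heart of the proof. First I would extract the distinct values among $\{y_i\}_{i=0}^{T-1}$ and relabel so that $\tilde y_0 < \tilde y_1 < \cdots < \tilde y_{M-1}$, grouping at each $\tilde y_m$ the set $V_m := \{-u_i : y_i = \tilde y_m\}$. The hypothesis \eqref{eq:monotonecond} implies two things: (i) within any $V_m$ the condition is vacuous (both factors vanish), so $V_m$ may contain several distinct values, which the multivalued framework allows; and (ii) for $m<n$, every element of $V_m$ is less than or equal to every element of $V_n$, because for indices $i,l$ with $y_l = \tilde y_m < \tilde y_n = y_i$ we get $-u_i \ge -u_l$. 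Setting $a_m := \min V_m$ and $b_m := \max V_m$, this gives $b_m \le a_{m+1}$ for each $m$.

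I would then define $\phi$ explicitly as follows. On each interval $(\tilde y_m, \tilde y_{m+1})$, let $\phi$ be the linear function interpolating between $b_m$ at $\tilde y_m$ and $a_{m+1}$ at $\tilde y_{m+1}$; at each $\tilde y_m$, set $\phi(\tilde y_m) = [a_m, b_m]$ (the closed interval, to absorb all values in $V_m$); and extend $\phi$ as a constant $a_0$ on $(-\infty, \tilde y_0)$ and as a constant $b_{M-1}$ on $(\tilde y_{M-1}, \infty)$. By the orderings established above, this graph is nondecreasing in the multivalued sense, so $\phi \in S_{0,\infty}$, and by construction $-u_i \in \phi(y_i)$ for all $i$.

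The step I expect to require the most care is verifying that the assignment $\phi(\tilde y_m) = [a_m,b_m]$ together with the linear segments produces a bona fide monotone graph — specifically, that the vertical intervals at the $\tilde y_m$'s mesh with the linear pieces without violating monotonicity. This reduces to the inequalities $a_m \le b_m$ (trivial) and $b_m \le a_{m+1}$ (from \eqref{eq:monotonecond} as above), so the obstacle is really just bookkeeping rather than a deep issue. No additional machinery beyond the sort-and-interpolate idea is needed.
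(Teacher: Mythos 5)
Your proof is correct and takes essentially the same route as the paper: the paper defers the formal proof to \cite{lambert04}, but its explicit interpolant in Equation~\eqref{eq:phiinterp} is exactly your construction --- sort the data so that both $y_i$ and $-u_i$ are nondecreasing, assign the interval $[-u_i,-u_{i+r}]$ at repeated abscissae (your $[a_m,b_m]$), interpolate linearly in between, and extrapolate by nearest neighbor. Your write-up additionally spells out the verifications the paper leaves to the reference (necessity, and that the glued graph is monotone); the only nitpick is that in the necessity direction monotonicity does not force $-u_i \ge -u_l$ when $y_i = y_l$, but there the product $(y_i-y_l)(u_i-u_l)$ vanishes anyway, so the inequality still holds.
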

\vspace{0.05in}

% (This lemma is written generally and assumes we'll add the (0,0) 
% point to the sequences later in the paper).
A formal proof is given in \cite{lambert04}.  If the finite sequences
satisfy Equation~\ref{eq:monotonecond} then there is, in general, more
than one $\phi \in S_{0,\infty}$ that interpolates the data. Here we
will provide an explicit formula for a $\phi \in S_{0,\infty}$ that
interpolates the data. First, re-order the points so that
$y_0 \le y_1 \le \cdots \le y_{T-1}$ and
$-u_0 \le -u_1 \le \cdots \le -u_{T-1}$. This re-ordering is possible
since the data satisfy Equation~\ref{eq:monotonecond}.  Next note that
there can be repeats in the input data: $y_i=y_{i+1}=\cdots=y_{i+r}$
for some $r>0$. In this case the nonlinearity $\phi$ is multi-valued:
$\phi(y_i) \in [-u_i,-u_{i+r}]$.  Finally, the re-ordered sequences
are interpolated by the following multi-valued function:
\begin{align}
\label{eq:phiinterp}
\phi(y) \subseteq \left\{
  \begin{array}{ll}
    -u_0 & \mbox{if } y < y_0 \\
    \left[-u_i, -u_{i+r}\right]  
         & \mbox{if } y = y_i =\cdots=y_{i+r} \\
         & \,\, \mbox{ for some } r \ge 0 \\
           (f_i-1) u_i - f_i u_{i+1} 
         &  \mbox{if } y_i < y < y_{i+1} \\
         & \,\, \mbox{ where } f_i:= \frac{y-y_i}{y_{i+1}-y_i}  \\    
    -u_{T-1} & \mbox{if } y > y_{T-1}
  \end{array} 
   \right.
\end{align}
This corresponds to linear interpolation or multi-valued output for
any input $y\in[y_0,y_{T-1}]$ and nearest neighbor extrapolation
otherwise.  This specific nonlinearity has  the following useful property:
\vspace{0.05in}
\begin{lemma}\label{lem:odd}
  Suppose the finite sequences $\{y_i\}_{i=0}^{T-1}$ and
  $\{u_i\}_{i=0}^{T-1}$ are odd, i.e.  $(y_i,u_i)$ is in the sequence
  if and only if $(-y_i,-u_i)$ is in the sequence. Then the
  nonlinearity $\phi$ in Equation~\ref{eq:phiinterp} is odd and has
  $0\in \phi(0)$.
\end{lemma}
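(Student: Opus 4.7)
The plan is to exploit the palindromic structure of the sorted data and then verify oddness case-by-case against the piecewise formula in Equation~\ref{eq:phiinterp}. First, I would observe that the hypothesis makes the multiset $\{(y_i,u_i)\}_{i=0}^{T-1}$ invariant under the involution $(y,u)\mapsto(-y,-u)$. Applying this involution to a sorted list reverses its order; since the re-ordering in Equation~\ref{eq:phiinterp} places $y_0\le\cdots\le y_{T-1}$ and $-u_0\le\cdots\le-u_{T-1}$, an admissible re-ordering can be chosen so that $y_i=-y_{T-1-i}$ and $u_i=-u_{T-1-i}$ for every $i$. When ties are present the re-ordering is not unique, but the multi-valued definition of $\phi$ depends only on the grouped range of $u$-values at each repeated $y$-value, so any admissible palindromic sorting yields the same function.

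Next, I would verify $\phi(-y)=-\phi(y)$ on each branch of the piecewise definition. In the nearest-neighbor regime $y>y_{T-1}$, the point $-y$ lies below $y_0$, so $\phi(-y)=-u_0$; the palindromic identity $u_0=-u_{T-1}$ then gives $\phi(-y)=u_{T-1}=-\phi(y)$, and the symmetric case $y<y_0$ is analogous. In the linear interpolation regime $y_i<y<y_{i+1}$, the point $-y$ lies in $(y_{T-2-i},y_{T-1-i})$; substituting the palindromic identities into the interpolation formula, the coefficient $f$ evaluated at $-y$ equals $1-f_i$, and a short algebraic rearrangement collapses to $-\phi(y)$. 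In the multi-valued regime $y=y_i=\cdots=y_{i+r}$, the point $-y$ coincides with $y_{T-1-(i+r)}=\cdots=y_{T-1-i}$, and the corresponding interval $[-u_{T-1-(i+r)},-u_{T-1-i}]$ is exactly the negation of $[-u_i,-u_{i+r}]$, giving $\phi(-y)=-\phi(y)$ as sets.

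For $0\in\phi(0)$ I would split on how $y=0$ sits within the sorted sequence. If $y=0$ coincides with one or more $y_i$'s (which is forced whenever $T$ is odd by the fixed-point structure of the involution), the palindromic symmetry makes the corresponding $u$-interval symmetric about zero, so $0\in\phi(0)$. Otherwise $T$ is even and $0$ lies strictly between $y_{T/2-1}<0$ and $y_{T/2}>0$; palindromic symmetry gives $y_{T/2}=-y_{T/2-1}$ and $u_{T/2}=-u_{T/2-1}$, so the interpolation coefficient is $f=1/2$ and the linear formula evaluates to $\tfrac12 u_{T/2-1}-\tfrac12 u_{T/2}=0$.

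The only genuine obstacle is the bookkeeping in the first step: the sorted sequence is not canonical in the presence of ties, so one must justify that an index-matching involution $i\leftrightarrow T-1-i$ can always be arranged consistently across both the $y$ and $u$ sequences. This is handled by first grouping indices by equal $y$-value and within each group by equal $-u$-value; since the data multiset is closed under the involution, these grouped blocks pair up into mirror blocks (with self-paired blocks necessarily lying at $y=0$), and one then orders each mirror pair so that the indices satisfy $i\leftrightarrow T-1-i$. Once this symmetric sorting is fixed, the remaining verifications reduce to the algebraic identities above.
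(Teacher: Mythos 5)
Your proof is correct and takes essentially the same approach as the paper, which simply asserts the lemma is ``straightforward by construction'' of $\phi$ in Equation~\ref{eq:phiinterp}: you carry out exactly that direct verification (palindromic re-indexing $y_i=-y_{T-1-i}$, $u_i=-u_{T-1-i}$, branch-by-branch oddness checks, and the necessary tie bookkeeping, which is the one genuinely delicate point and which you handle properly). One cosmetic slip: in the even-$T$ case the formula $(f-1)u_i-fu_{i+1}$ with $f=1/2$ gives $-\tfrac{1}{2}u_{T/2-1}-\tfrac{1}{2}u_{T/2}$ rather than $\tfrac{1}{2}u_{T/2-1}-\tfrac{1}{2}u_{T/2}$; since $u_{T/2}=-u_{T/2-1}$, the correct expression still evaluates to $0$, so the conclusion stands.
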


\begin{proof}
  The proof is straightforward by construction of $\phi$
  in Equation~\ref{eq:phiinterp}.
\end{proof}

\section{Main Results}

\subsection{Construction for $S_{0,\infty}$}
\label{sec:mainS0inf}

Theorem~\ref{th:1} below provides conditions for the existence of
$\phi \in S_{0,\infty}$ such that the Lurye system has a
non-trivial $T$-periodic solution.  The proof relies on the response
of the LTI system $G\in \RH$ due to periodic inputs. Let
$g:=\{g_0,g_1,g_2,\dots\}$ denote the impulse response of $G$.
The convolution summation for a (not necessarily periodic)
input sequence $\{u_i\}_{i=-\infty}^\infty$ is:
\begin{align}
  y_k = \sum_{i=-\infty}^k  g_{k-i} u_i
\end{align}
Next, consider the case where the input is $T$-periodic so that
$u_{i+T} = u_i$ for all $i$.  The terms in convolution summation can
be re-grouped.  This yields the following $T$-periodic output
\begin{align}
   y_k =\sum_{i=0}^T h_{k-i} u_i 
   \,\, \mbox{ where } \,\, h_i := \sum_{l=0}^\infty g_{i+lT}.
\end{align}
To simplify the notation, define the column vector
$H_T:=\begin{bmatrix}h_0 & h_1 & \ldots & h_{T-1}\end{bmatrix}^\top
\in \R^T$.  Similarly, stack the $T$-periodic sequences
$\{u_i\}_{i=0}^{T-1}$ and $\{y_i\}_{i=0}^{T-1}$ into vectors $U_T$ and
$Y_T$, respectively.  The $T$-periodic inputs and outputs are related
by $Y_T = C(H_T) U_T$ where $C(H_T)$ is the circulant matrix in
Equation~\ref{eq:1}. We are now ready to state the main results.

\vspace{0.05in}
\begin{theorem}\label{th:1}
  Let $G\in\RH$ and integers $0<\alpha<\beta$ be given.  Assume
  $\alpha$ and $\beta$ are co-prime, i.e. their greatest common
  divisor is 1. Define the frequency
  $\omega:=\frac{\alpha \pi}{\beta}$ with corresponding period 
  $T=2\beta$ if $\alpha$ is odd and $T=\beta$ if $\alpha$ is even.
  There exists $\phi \in S_{0,\infty}$ such that the Lurye system
  has a non-trivial $T$-periodic solution if 
  \begin{equation}\label{eq:5}
    \pi-\frac{\pi}{T} \le
    \angle G(e^{j\omega})\le \pi+\frac{\pi}{T}.
  \end{equation} 
\end{theorem}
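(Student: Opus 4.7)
The plan is to construct $(U_T, Y_T)$ explicitly as a sinusoidal pair at the frequency $\omega$, verify the interpolation condition of Lemma~\ref{lem:2} through Lemma~\ref{lem:1}, and then obtain a $\phi \in S_{0,\infty}$ with $\phi(0)=0$ via Lemma~\ref{lem:odd}.

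First I would take the ansatz $u_k = A\sin(\omega k + \psi_0)$ with $A>0$ and a phase $\psi_0$ to be fixed. Since $\alpha$ and $\beta$ are coprime, $T$ is precisely the fundamental period of a real sinusoid at $\omega = \alpha\pi/\beta$, so $U_T := [u_0,\ldots,u_{T-1}]^\top$ is $T$-periodic and not identically zero. The $T$-periodic LTI response is then $y_k = A|G(e^{j\omega})|\sin(\omega k + \psi_0 + \theta)$ with $\theta := \angle G(e^{j\omega})$, and the relation $Y_T = C(H_T) U_T$ is automatic from the discussion preceding the theorem.

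Next I would check Lemma~\ref{lem:2}. A product-to-sum identity gives
\begin{equation*}
(y_i - y_l)(u_i - u_l) = 4A^2 |G(e^{j\omega})|\, \sin^2\!\Bigl(\tfrac{\omega(i-l)}{2}\Bigr)\, \cos(\varphi_{il})\cos(\varphi_{il}+\theta),
\end{equation*}
where $\varphi_{il} := \omega(i+l)/2 + \psi_0$. Writing $\theta = \pi + \delta$ with $|\delta| \le \pi/T$ collapses the Lemma~\ref{lem:2} condition to $\cos(\varphi_{il})\cos(\varphi_{il}+\delta) \ge 0$. The definition of $T$ is tailored so that $\omega/2$ is an integer multiple of $\pi/T$: $\omega/2 = \alpha\pi/T$ when $T=2\beta$ ($\alpha$ odd), and $\omega/2 = (\alpha/2)\pi/T$ when $T=\beta$ ($\alpha$ even). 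Hence $\varphi_{il}$ lies on the lattice $\psi_0 + (\pi/T)\Z$. Taking $\psi_0 = \pi/2$ places $\varphi_{il}$ on the grid $\{\pi k/T + \pi/2 : k\in\Z\}$ on which Lemma~\ref{lem:1} directly supplies the required inequality. Lemma~\ref{lem:2} then yields a $\phi \in S_{0,\infty}$ with $-u_k \in \phi(y_k)$, completing the construction of a non-trivial $T$-periodic Lurye solution.

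The most delicate step is ensuring $0 \in \phi(0)$ so that $\phi$ belongs to the class considered in the paper. For $T$ even (i.e.\ $\alpha$ odd), the data is odd in the sense of Lemma~\ref{lem:odd}: since $\sin(\alpha\pi + x) = -\sin(x)$, both $u_{k+T/2} = -u_k$ and $y_{k+T/2} = -y_k$, so Lemma~\ref{lem:odd} immediately gives $0 \in \phi(0)$. For $T$ odd (i.e.\ $\alpha$ even, hence $\beta$ odd), the half-period symmetry fails; I anticipate the proof handles this case by a dedicated argument, either exploiting the reflection $k \mapsto T-k$ under a different phase $\psi_0$ or by appending $(0,0)$ to the data and verifying the single-index condition $y_k u_k \le 0$ through the same Lemma~\ref{lem:1} machinery. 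This bookkeeping for the $T$-odd case is the main obstacle.
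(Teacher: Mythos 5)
Your construction and the verification of the interpolation condition coincide with the paper's proof: your choice $\psi_0=\pi/2$ makes $u_k=A\cos(\omega k)$, which is exactly the paper's $U_T=\hbox{Re}\{V_T\}$; the product-to-sum identity, the reduction to $\cos(\varphi_{il})\cos(\varphi_{il}+\delta)\ge 0$ on the lattice $\frac{\pi}{T}\Z+\frac{\pi}{2}$, and the appeal to Lemmas~\ref{lem:1} and~\ref{lem:2} are the same steps, and your half-period argument for $\alpha$ odd matches the paper's Case~A. However, there is a genuine gap: for $\alpha$ even ($T=\beta$ odd) you do not prove $0\in\phi(0)$, you only conjecture two possible fixes, and both fail. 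Appending $(0,0)$ to the data requires $y_iu_i\le 0$, i.e. $\cos(\omega i)\cos(\omega i+\delta)\ge 0$ for all $i$; the angles $\omega i$ fill the grid $\frac{2\pi}{T}\Z$, and since $T$ is odd this grid contains a point at distance exactly $\frac{\pi}{2T}$ from $\pm\frac{\pi}{2}$, so the condition is violated whenever $\frac{\pi}{2T}<|\delta|\le\frac{\pi}{T}$ --- precisely the regime that Theorem~\ref{th:1} covers but the odd-nonlinearity Theorem~\ref{th:1a} (with its tighter bound $\pi/(2\beta)$) does not. This is not an accident: the paper's examples with $\alpha$ even show the interpolated curve genuinely misses the origin. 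Your first suggestion fares no better: moving $\psi_0$ off the lattice $\frac{\pi}{2}+\frac{\pi}{T}\Z$ breaks the applicability of Lemma~\ref{lem:1}, and on the lattice the output points are the input points rotated by $\pi+\delta$, which destroys any mirror symmetry of the paired data $(y_i,-u_i)$ unless $\delta$ takes special values.

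The missing idea in the paper is a DC shift rather than a symmetry argument: redefine $\hat{U}_T=\hbox{Re}\{V_T\}+\xi\mathbf{1}$ and use $C(H_T)\mathbf{1}=G(1)\mathbf{1}$ to get $\hat{Y}_T=\hbox{Re}\{G(e^{j\omega})V_T\}+\xi G(1)\mathbf{1}$. This adds the constants $\xi$ and $\xi G(1)$ to the input and output data, so all pairwise differences $(y_i-y_l)$ and $(u_i-u_l)$ --- and hence the Lemma~\ref{lem:2} condition you already verified --- are unchanged, while the interpolated curve is translated along the line through $(0,0)$ with direction $(G(1),-1)$. Choosing $\xi$ at the intersection of the original monotone curve with that line forces the shifted nonlinearity to satisfy $0\in\phi(0)$, yielding a (generally non-odd) $\phi\in S_{0,\infty}$ with the desired $T$-periodic solution. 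Without this step (or an equivalent one), your proof establishes the theorem only for $\alpha$ odd.
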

\begin{proof}
  Define the $T$-periodic input $U_T :=\hbox{Re}\{V_T\}$ where:
  \begin{align}
    V_T:=\begin{bmatrix}
    1 & e^{j\omega}& \ldots & e^{j\omega (T-1)}
    \end{bmatrix} \in \C^T.
  \end{align}
  Note that $V_T$ is an eigenvector of $C(H_T)$ with eigenvalue
  $G(e^{j\omega})$ \cite{Gelb:1968,Golub:13}. Hence
  $C(H_T) V_T = G(e^{j\omega}) V_T$ and the $T$-periodic output is
 $Y_T = \hbox{Re} \{ C(H_T) V_T \}
        = \hbox{Re} \{ G(e^{j\omega}) V_T \}$.

  Next, we show that the input/output sequences can be interpolated by
  a nonlinearity $\phi \in S_{0,\infty}$.  If Equation~\ref{eq:5} holds
  then $G(e^{j\omega}) = -r e^{j\delta}$ for some $r>0$ and
  $|\delta|\le \pi/T$.  Use the expressions for $U_T$, $Y_T$, and
  $G(e^{j\omega})$ to show the following:
  \begin{align*}
    (y_i-y_l)(u_i-u_l)=
    \hbox{Re}\{-r e^{j\delta}(e^{j\omega i}-e^{j\omega l})\} \, 
    \hbox{Re}\{e^{j\omega i}-e^{j\omega l}\}.
  \end{align*}
  The following identity holds for any integers $i$ and $l$:
  \begin{align*}
    e^{j\omega i}-e^{j\omega l} = 
    2\sin\left(\frac{\omega}{2}(i-l)\right)e^{j(\frac{\omega}{2}(i+l)+\frac{\pi}{2})}.
  \end{align*}
  This identity yields:
  \begin{align*}
    & (y_i-y_l)(u_i-u_l)= 
     -c\ 
    \hbox{Re}\{e^{j\delta}e^{j(\frac{\omega}{2}(i+l)+\frac{\pi}{2})}\}
    \hbox{Re}\{e^{j(\frac{\omega}{2}(i+l)+\frac{\pi}{2})} \}
  \end{align*}
  where $c:=4r\sin^2\left(\frac{\omega}{2}(i-l)\right)\geq0$.
  Finally, $\frac{\omega}{2}=\frac{\alpha\pi}{T}$ if $\alpha$ is odd
  or $\frac{\omega}{2}=\frac{\alpha\pi}{2T}$ if $\alpha$ is even.  In
  either case, $\frac{\omega}{2}(i+l)=\frac{\pi}{T} k$ for some
  integer $k$. It follows from Lemma~\ref{lem:1} that
  $(y_i-y_l)(u_i-u_l)\leq 0$ for any $i,l \in \{0,\ldots,T-1\}$.  By
  Lemma~\ref{lem:2}, there exists $\phi\in S_{0,\infty}$ such that
  $-u_i \in \phi(y_i)$ for $i=0,\ldots,T-1$.

  The only remaining issue is to show that the multi-valued function 
  satisfies $0\in \phi(0)$. There are two cases:
  
  \emph{A) $\alpha$ is odd:} The frequency is
  $\omega=\frac{2\pi\alpha}{T}$ where $T=2\beta$ is even.  The points
  in $V_T \in \C^T$: (i) are equidistantly spaced around the unit
  circle, (ii) are symmetric about both the real and imaginary axis, (iii) and there is a rotational symmetry of $\pi$. The points in
  $C(H_T) V_T = G(e^{j\omega}) V_T$ are scaled and rotated by the
  magnitude and phase of $G(e^{j\omega})$. If $G$ satisfies the phase
  constraint in \eqref{eq:5} then these points are: (i) equidistantly
  spaced around a circle, (ii) they are rotated an angle $\delta$ with respect to $V_T$, (iii) and there is a rotational symmetry of $\pi$. As a result the interpolating data is odd: if
  $(y_i,-u_i)$ is a point in the input/output data then $(-y_i,u_i)$
  is as well.  By Lemma~\ref{lem:odd}, the interpolating nonlinearity
  is not only monotone but is also odd and satisfies
  $0\in \phi(0)$.

  % XXX - Need to check with JC on the statements above as they
  %  don't seem to perfectly match properties 1-6 in his proof
  %  of Corollary 1, e.g. I don't think yi=0 is possible which
  %  he has as property 1. But then why have property 3?
  % XXX - Is the staircase function given earlier actually odd?
  %  Or does the definition need to be modified?
  % XXX - What is happening if the phase of G is strictly
  %  in inside [pi-pi/T,pi+pi/T]?  Most of our discussions focused
  %  on the case where the phase is exactly pi \pm pi/T. 
  %  The corollaries below seem to be written with this 
  %  phase equality constraint but this main theorem allows
  %  inequality.  It seems like if the phase constraint is
  %  is strictly satisfied then we just get a larger class
  %  of possible counterexamples and the points (+/- yi,0)
  %  form a smaller interval around the origin. Does this have
  %  any particular importance?

  \emph{B) $\alpha$ is even:} The frequency is
  $\omega=\frac{\alpha\pi}{T}$ where $T=\beta$ is odd.  The points in
  $V_T \in \C^T$ are again equidistantly spaced around the unit circle
  and symmetric about the real axis.  However, the rotational symmetry
  of $\pi$ no longer holds and hence the sequence of points is not
  odd. As a result, the interpolated function is not odd.  This is an
  expected property from the analysis in~\cite{Zhang:20} for the case
  where $\alpha$ is even. More importantly, the interpolated function
  fails to satisfy $0\in \phi(0)$.  It is possible to shift the
  nonlinearity to recover $0\in \phi(0)$. First, modify the definition
  of the input sequence to be
  $\hat{U}_T=\hbox{Re}\{V_T\}+\xi\mathbf{1}$ where
  $\mathbf{1} \in \R^T$ is a vector of ones and $\xi$ is to be
  chosen. Note that $C(H_T)\mathbf{1} = G(1) \mathbf{1}$ where
  $G(1) = \sum_{k=0}^\infty g_k$ is the DC gain of the system. Thus
  the modified output sequence is:
  \begin{align}\label{eq:2}
    \hat{Y}_T = \hbox{Re}\{ C(H_T) \hat{U}_T \}
        = \hbox{Re}\{ G(e^{j\omega}) V_T \} + \xi G(1) \mathbf{1}
  \end{align}
  This modification adds the constants $\xi$ and $\xi G(1)$ to the 
  input and output sequences, respectively.  The choice of
  $\xi$ shifts the original curve generated by $(Y_T,-U_T)$ along the
  line connecting $(0,0)$ and $(G(1),-1)$.  Find the intersection
  of the original curve with the line connecting $(0,0)$ and $(G(1),-1)$.  
  This yields the value of $\xi$ so that the modified function
  satisfies $0\in \phi(0)$.  This function is, in general, non-odd and
  generates a $T$-periodic solution to the Lurye system.
\end{proof}
  % XXX It seems like the construction to generate an odd function
  % gives points like (yi,-ui) and (-yi,+ui) about (0,0) where both yi
  % and ui are non-zero.  Doesn't this mean that we need to directly
  % connect the points (yi,-ui), (-yi,+ui), and (0,0) by a nonzero
  % slope? In other words, the function will no longer be a
  % stair-step.
  % XXX - Just noticed that we need to be careful about the
  % use of exp(+jw) versus exp(-jw).  I think we should be using
  % the + version.  (Probably we could use either version as long
  % as we are consistent throughout).
%\vspace{0.05in}
If we restrict our attention to odd nonlinearities, i.e. $\phi\in S_{0,\infty}^\text{odd}$, the phase condition must be modified as follows:
\vspace{0.05in}

\begin{theorem}\label{th:1a}
  Let $G\in\RH$ and integers $0<\alpha<\beta$ be given.  Assume
  $\alpha$ and $\beta$ are co-prime. Define the frequency
  $\omega:=\frac{\alpha \pi}{\beta}$ with corresponding period 
  $T=2\beta$ if $\alpha$ is odd and $T=\beta$ if $\alpha$ is even. There exists $\phi \in S^\text{odd}_{0,\infty}$ such
  that the Lurye system has a non-trivial $T$-periodic solution if
  \begin{equation}\label{eq:5a}
    \pi-\frac{\pi}{2\beta} \le
    \angle G(e^{j\omega})\le \pi+\frac{\pi}{2\beta}.
  \end{equation} 
\end{theorem}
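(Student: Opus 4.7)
The plan is to run the proof of Theorem~\ref{th:1} essentially unchanged, except that Lemma~\ref{lem:2} is applied to the augmented $2T$-point data set $\{(y_i,-u_i)\}_{i=0}^{T-1}\cup\{(-y_i,u_i)\}_{i=0}^{T-1}$ rather than just the first $T$ points. Forcing this symmetry into the data is exactly what is needed to produce an odd interpolant. Splitting the pairwise inequalities of Lemma~\ref{lem:2} by which half of the augmented set each point comes from, same-half pairs reproduce the monotone inequality $(y_i-y_l)(u_i-u_l)\le 0$ already verified in Theorem~\ref{th:1}, while mixed pairs deliver an additional ``oddness'' inequality $(y_i+y_l)(u_i+u_l)\le 0$ for all $i,l\in\{0,\ldots,T-1\}$.

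The next step is to expand this new inequality via the cosine companion of the sine identity used in Theorem~\ref{th:1}. Using $e^{j\omega i}+e^{j\omega l}=2\cos(\omega(i-l)/2)e^{j\omega(i+l)/2}$ and again writing $G(e^{j\omega})=-re^{j\delta}$, a direct calculation yields
\[
(y_i+y_l)(u_i+u_l)=-4r\cos^2\!\bigl(\tfrac{\omega}{2}(i-l)\bigr)\,\hbox{Re}\{e^{j\delta}e^{j\omega(i+l)/2}\}\,\hbox{Re}\{e^{j\omega(i+l)/2}\}.
\]
Thus the oddness inequality has the same ``same-half-plane'' form as the monotone one, but with the sample $e^{j\omega(i+l)/2}$ in place of the $\pi/2$-shifted sample $e^{j(\omega(i+l)/2+\pi/2)}$ that arose in Theorem~\ref{th:1}.

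The key step is then to apply Lemma~\ref{lem:1} with parameter $2\beta$ to \emph{both} families of samples at once. Since $\omega/2=\alpha\pi/(2\beta)$ and $\pi/2=\beta\cdot\pi/(2\beta)$ are each an integer multiple of $\pi/(2\beta)$, the arguments $\omega(i+l)/2$ and $\omega(i+l)/2+\pi/2$ all lie on the uniform grid $\{\pi k/(2\beta)+\pi/2:k\in\Z\}$ of Lemma~\ref{lem:1}, uniformly over the parity of $\alpha$. The hypothesis $|\delta|\le\pi/(2\beta)$ is precisely the bound that Lemma~\ref{lem:1} delivers at this period, so both inequalities hold. Verifying this uniform placement across the two parity cases is the main piece of bookkeeping, and is where I expect any minor friction to sit: the $\alpha$-odd case recovers the same grid spacing as in Theorem~\ref{th:1}, whereas the $\alpha$-even case (for which $\beta$ must be odd by coprimeness) now uses the oddness samples to interleave the earlier monotone samples, halving the effective spacing and producing the tighter bound.

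With both inequalities in place, Lemma~\ref{lem:2} produces $\phi\in S_{0,\infty}$ interpolating the augmented $2T$-point data, and because that data is odd by construction, Lemma~\ref{lem:odd} upgrades $\phi$ to $S_{0,\infty}^{\text{odd}}$ and automatically gives $0\in\phi(0)$, avoiding the ad-hoc shift needed in case B of Theorem~\ref{th:1}. The signals $U_T=\hbox{Re}\{V_T\}$ and $Y_T=\hbox{Re}\{G(e^{j\omega})V_T\}$ remain $T$-periodic, and the circulant eigenvector identity together with $u_i\in-\phi(y_i)$ confirms that $(U_T,Y_T)$ is a non-trivial $T$-periodic solution of the Lurye system.
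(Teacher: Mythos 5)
Your proposal is correct and follows essentially the same route as the paper's proof: append the reflected data $(-y_i,u_i)$ to force oddness, verify the interpolation condition of Lemma~\ref{lem:2} on the augmented set, and conclude via Lemmas~\ref{lem:2} and~\ref{lem:odd} (with the $\alpha$ odd case already covered by the proof of Theorem~\ref{th:1}). Your cosine-identity computation and the observation that both sample families lie on the $\pi/(2\beta)$-spaced grid of Lemma~\ref{lem:1} (with the oddness samples interleaving the monotone ones when $\alpha$ is even, since $\beta$ is then odd) correctly supplies the verification that the paper compresses into the single sentence stating that the phase condition \eqref{eq:5a} implies the appended data satisfies Equation~\ref{eq:monotonecond}.
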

\vspace{0.05in}
\begin{proof}
  The statement with $\alpha$ odd follows from the proof of
  Theorem~\ref{th:1}.  If $\alpha$ is even then use the method in the
  proof of Theorem~\ref{th:1} to construct sequences
  $\{u_i\}_{i=0}^{\beta-1}$ and $\{y_i\}_{i=0}^{\beta-1}$.  Next,
  append the data to include both $(y_i,-u_i)$ and $(-y_i,u_i)$ for
  $i=0,\ldots,\beta-1$.  The phase condition in \eqref{eq:5a} can be
  used to show that the appended data satisfies
  Equation~\ref{eq:monotonecond}. Hence the data can be interpolated
  by a monotone nonlinearity $\phi$ (Lemma~\ref{lem:2}). Moreover, the
  appended data is odd and hence $\phi \in S^\text{odd}_{0,\infty}$
  (Lemma~\ref{lem:odd}).  The appended data is only used in the
  function interpolation and the Lurye system will have a
  $\beta$-periodic solution with only
  $\{(y_i,-u_i)\}_{i=0}^{\beta-1}$.
\end{proof}

`% XXX - ORIGINAL PROOF
% The statement with $\alpha$ odd follows from the proof of
% Theorem~\ref{th:1}.  If $\alpha$ is even then use the methodology in
% the proof of Theorem~\ref{th:1} to construct sequences
% $\{u_i\}_{i=0}^\beta$ and $\{y_i\}_{i=0}^\beta$. Next, construct an
% odd nonlinearity to interpolate the data $(y_i,-u_i)$ and
% $(-y_i,u_i)$ for all $i=0,1,...,\beta-1$.  The new set of points can
% be seen as the projection of $2\beta$ equidistantly spaced points
% around the unit circle. As a result, the phase condition to ensure
% that Equation~\ref{eq:monotonecond} holds is the same as in the case
% $\alpha$ odd.  By Lemma~\ref{lem:odd}, the resulting nonlinearity
% belongs to $S^\text{odd}_{0,\infty}$.

  % If $\alpha$ is even then Theorem~\ref{th:1} requires the phase of
  % $G(e^{j\omega})$ to lie within $\pi\pm \frac{\pi}{\beta}$.
  % Theorem~\ref{th:1a} requires the tigher phase constraint of
  % $\pi\pm \frac{\pi}{2\beta}$. This tighter phase constraint enables
  % the interpolation condition \eqref{eq:monotonecond} to be satisfied.

\subsection{Construction for $S_{0,k}$ with $k<\infty$}
\label{sec:mainS0k}

Consider a Lurye system of $(G,\phi)$ where $\phi$ is slope restricted
with $k<\infty$. The loop transformation in Figure~\ref{fig:1} maps to
a Lurye system $(\tilde{G},\tilde{\phi})$ where $\tilde{\phi}$ is
monotone.
\begin{lemma}
  \label{lem:looptr}
  The Lurye system with $G\in\RH$ and $\phi\in S_{0,k}$
  ($\phi\in S_{0,k}^\text{odd}$) has a periodic solution if and only
  if the Lurye system with $\tilde{G}:=G+1/k$ and
  $\tilde{\phi}\in S_{0,\infty}$
  ($\tilde{\phi}\in S_{0,\infty}^\text{odd}$) has a periodic solution.
\end{lemma}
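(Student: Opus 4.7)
The plan is to exploit the classical parallel-gain loop transformation sketched in Figure~\ref{fig:1}: injecting the static gain $1/k$ in parallel with the nonlinearity and anti-parallel with the plant. Given a $T$-periodic solution $(Y_T,U_T)$ of the original Lurye system with $\phi\in S_{0,k}$, I would set $\tilde U_T := U_T$ and $\tilde Y_T := Y_T + U_T/k$; conversely, given $(\tilde Y_T,\tilde U_T)$ for $(\tilde G,\tilde\phi)$, I would set $U_T := \tilde U_T$ and $Y_T := \tilde Y_T - U_T/k$. Because $\tilde G - G = 1/k$ is a static gain, the input/output relation $Y_T = C(H_T)U_T$ maps exactly onto $\tilde Y_T = C(\tilde H_T)\tilde U_T$, so only the slope constraints on the nonlinearities remain to be checked.

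For the forward direction, I would expand
\begin{align*}
(\tilde y_i - \tilde y_l)(\tilde u_i - \tilde u_l)
= (y_i-y_l)(u_i-u_l) + \tfrac{1}{k}(u_i-u_l)^2,
\end{align*}
substitute $u_i - u_l = -(\phi(y_i)-\phi(y_l))$, and read off $(y_i-y_l)^2\, s(s/k-1)\le 0$ where $s\in[0,k]$ is the local slope of $\phi$. Hence the transformed data obeys~\eqref{eq:monotonecond} and Lemma~\ref{lem:2} supplies a $\tilde\phi\in S_{0,\infty}$ interpolating $(\tilde Y_T,-\tilde U_T)$ through formula~\eqref{eq:phiinterp}. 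For the converse direction, the dual identity $k(y_i-y_l)+(u_i-u_l)=k(\tilde y_i-\tilde y_l)$ yields
\begin{align*}
k(y_i-y_l)(u_i-u_l) + (u_i-u_l)^2
= k(\tilde y_i-\tilde y_l)(\tilde u_i-\tilde u_l) \le 0,
\end{align*}
which simultaneously forces $(y_i-y_l)(u_i-u_l)\le 0$ and $|u_i-u_l|\le k|y_i-y_l|$, i.e.\ slope in $[0,k]$, and in particular shows that $y_i=y_l$ implies $u_i=u_l$ so the data is functional. Extending $\phi$ to $\R$ by piecewise-linear interpolation between the sorted data and constant extrapolation outside then yields $\phi\in S_{0,k}$.

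The odd-symmetric case drops out immediately because the map $(y,u)\mapsto (y+u/k,u)$ commutes with the negation $(y,u)\mapsto(-y,-u)$; oddness of the data is thus preserved in both directions and Lemma~\ref{lem:odd} supplies the odd interpolant. The one delicate point I anticipate is the handling of coincident abscissae $\tilde y_i = \tilde y_l$ with $\tilde u_i \ne \tilde u_l$, which occurs precisely when $\phi$ saturates the slope $k$ on the relevant segment; this forces the interpolating $\tilde\phi$ to be multi-valued at that abscissa, but this is exactly the content of $S_{0,\infty}$ as defined in the paper, so no additional machinery is needed beyond Lemma~\ref{lem:2} and formula~\eqref{eq:phiinterp}.
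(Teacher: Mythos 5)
Your proof is correct, but it takes a more explicit route than the paper, whose entire proof is a citation: ``follows from standard loop transformation arguments, see Chapter III, Section 6, in~\cite{D&V:75}.'' The paper thus relies on the classical operator-level loop transformation, which defines $\tilde{\phi}$ as the (possibly multivalued) relation obtained by feeding back $-1/k$ around $\phi$, and which applies to arbitrary signals, not just periodic ones. You instead verify the equivalence entirely at the level of the finite periodic data, using the paper's own interpolation machinery: the identity $(\tilde y_i - \tilde y_l)(\tilde u_i - \tilde u_l) = (y_i-y_l)(u_i-u_l) + \tfrac{1}{k}(u_i-u_l)^2$ together with the slope bound gives Equation~\ref{eq:monotonecond} for the transformed data, and Lemma~\ref{lem:2} supplies $\tilde\phi$; your converse computation $b(ka+b)\le 0$ with $a = y_i-y_l$, $b = u_i-u_l$ correctly recovers both sign and slope-$k$ bounds, including single-valuedness of the back-transformed data, matching the paper's remark after Proposition~\ref{prop:1} that $\phi$ ``is no longer multi-valued after the loop transformation.'' What your route buys is self-containedness within the paper's finite-interpolation framework and an explicit account of the multivalued saturation at slope $k$; what the citation buys is generality (the operator argument needs no periodicity and transfers stability properties wholesale, which is how~\cite{D&V:75} states it).

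One small gap in your odd case: you assert that ``oddness of the data is thus preserved in both directions,'' but the data generated by a periodic solution of an \emph{odd} nonlinearity need not itself be an odd set of points. You must first symmetrize: since $\phi$ (resp.\ $\tilde\phi$) is odd, each negated point $(-y_i,-u_i)$ also lies on its graph, so the appended data still satisfies the pairwise slope conditions; only then does your commuting-with-negation observation, combined with Lemma~\ref{lem:odd}, yield an odd interpolant, with the periodic solution using only the original points. This is exactly the appending device the paper uses in the proof of Theorem~\ref{th:1a}, and it is a one-line fix to your argument.
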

\begin{proof}
  The proof follows from standard loop transformation arguments,
  see Chapter III, Section 6, in~\cite{D&V:75}.
\end{proof}
\begin{figure}[ht]
  \centering
  \ifx\JPicScale\undefined\def\JPicScale{0.25}\fi
\unitlength \JPicScale mm
\begin{picture}(180,175)(0,-110)
\thicklines
\put(-10,0){\vector(1,0){30}}
\put(23,0){\circle{6}}
\put(-5,5){0}

\put(26,0){\vector(1,0){50}}
\put(34,5){$u$}
\put(76,-15){\framebox(30,30){$G$}}

\put(76,30){\framebox(30,30){$1/k$}}
\put(51,0){\line(0,1){45}}
\put(51,45){\vector(1,0){25}}
\put(106,45){\line(1,0){33}}
\put(139,45){\vector(0,-1){42}}

\put(106,0){\vector(1,0){30}}
\put(139,0){\circle{6}}
\put(121,5){$y$}
\put(142,0){\line(1,0){20}}
\put(152,5){$\tilde{y}$}
\put(162,0){\vector(0,-1){45}}
\put(162,-48){\circle{6}}
\put(43,-20){\dashbox(105,85)}
\put(155,50){$\tilde{G}$}

\put(196,-48){\vector(-1,0){30}}
\put(186,-43){0}
\put(159,-48){\vector(-1,0){17}}
\put(139,-48){\circle{6}}
\put(136,-48){\vector(-1,0){30}}
\put(76,-63){\framebox(30,30){$\phi$}}
\put(121,-43){$y$}

\put(76,-108){\framebox(30,30){$1/k$}}
\put(51,-48){\line(0,-1){45}}
\put(51,-93){\vector(1,0){25}}
\put(106,-93){\line(1,0){33}}
\put(139,-93){\vector(0,1){42}}
\put(43,-113){\dashbox(105,85)}
\put(155,-105){$\tilde{\phi}$}

\put(76,-48){\line(-1,0){53}}
\put(23,-48){\vector(0,1){45}}
\put(26,-15){\line(1,0){7}}

% %\linethickness{0.3mm}
% \put(10,2.5){\line(1,0){10}}
% \put(10,2.5){\line(0,1){7.5}}
% \put(10,10){\vector(0,1){0.12}}
% \put(10,12.5){\circle{5}}

% %\linethickness{0.5mm}
% %\put(12,15){\line(1,0){2}}
% \linethickness{0.3mm}
% \put(0,12.5){\line(1,0){7.5}}
% \put(7.5,12.5){\vector(1,0){0.12}}
% \put(12.5,12.5){\line(1,0){7.5}}
% \put(20,12.5){\vector(1,0){0.12}}
% \put(30,12.5){\line(1,0){10}}
% \put(40,5){\line(0,1){7.5}}
% \put(40,5){\vector(0,-1){0.12}}

% \put(40,2.5){\circle{5}}
% \put(42.5,2.5){\line(1,0){7.5}}
% \put(42.5,2.5){\vector(-1,0){0.12}}
% \put(30,2.5){\line(1,0){7.5}}
% \put(30,2.5){\vector(-1,0){0.12}}

% \put(20,10){\line(1,0){10}}
% \put(20,15){\line(1,0){10}}
% \put(20,10){\line(0,1){5}}
% \put(30,10){\line(0,1){5}}

% \put(20,0){\line(1,0){10}}
% \put(20,5){\line(1,0){10}}
% \put(20,0){\line(0,1){5}}
% \put(30,0){\line(0,1){5}}

% \put(25,12.5){\makebox(0,0)[cc]{$G$}}
% \put(25,2.5){\makebox(0,0)[cc]{$\phi$}}

% \put(3,14){\makebox(0,0)[cc]{$0$}}
% \put(15,14){\makebox(0,0)[cc]{$u$}}
% \put(35,14){\makebox(0,0)[cc]{$y$}}
% \put(12.5,9){\makebox(0,0)[cc]{$-$}}
% \put(47,4){\makebox(0,0)[cc]{$0$}}
% %\put(35,4){\makebox(0,0)[cc]{$e_2$}}
% %\put(15,4){\makebox(0,0)[cc]{$h$}}
% %\put(35,14){\makebox(0,0)[cc]{$y$}}
\end{picture}
  \caption{Loop transformation for a Lurye systems}
  \label{fig:1}
  \vskip-5mm
\end{figure}
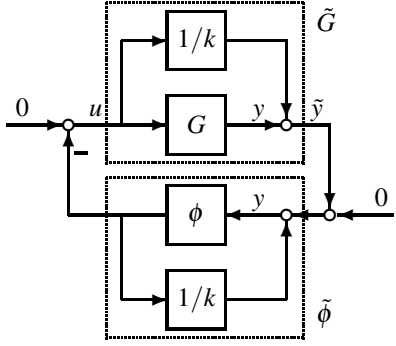

\vspace{0.05in}
\begin{proposition}\label{prop:1}
  Let $G\in\RH$ and integers $0<\alpha<\beta$ be given.  Assume
  $\alpha$ and $\beta$ are co-prime. Define the frequency
  $\omega:=\frac{\alpha \pi}{\beta}$ with corresponding period
  $T=2\beta$ if $\alpha$ is odd and $T=\beta$ if $\alpha$ is even.
  There is $\phi \in S_{0,k}$ with $k<\infty$ such that the Lurye
  system has a non-trivial $T$-periodic solution if
  \begin{align}
    \label{eq:prop1cond}
    R(\omega)+\frac{1}{k}<0 \mbox{ and }
    \frac{ -|I(\omega)|}{R(\omega)+1/k} \le 
    \tan\left(\frac{\pi}{T}\right), 
  \end{align}
  where $R(\omega)=\hbox{Re}\left\{G(e^{j\omega})\right\}$ and
  $I(\omega)=\hbox{Im}\left\{G(e^{j\omega})\right\}$.  
\end{proposition}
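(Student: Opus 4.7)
The plan is to combine the loop transformation Lemma \ref{lem:looptr} with the monotone result Theorem \ref{th:1}. By Lemma \ref{lem:looptr}, the existence of $\phi \in S_{0,k}$ producing a $T$-periodic solution for the Lurye system with plant $G$ is equivalent to the existence of $\tilde\phi \in S_{0,\infty}$ producing a $T$-periodic solution for the Lurye system with $\tilde{G}:=G+1/k$. So it suffices to show that, under the hypothesis \eqref{eq:prop1cond}, the transformed plant $\tilde{G}$ satisfies the phase condition \eqref{eq:5} of Theorem \ref{th:1} at the same frequency $\omega=\alpha\pi/\beta$, and then invoke Theorem \ref{th:1} directly.

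Next I would translate the phase condition into rectangular coordinates. Observe that
\[
\tilde{G}(e^{j\omega}) \,=\, \bigl(R(\omega)+\tfrac{1}{k}\bigr) + j\,I(\omega),
\]
and the condition $\pi-\pi/T \le \angle \tilde{G}(e^{j\omega}) \le \pi+\pi/T$ says exactly that $\tilde{G}(e^{j\omega})$ lies in the closed cone of half-angle $\pi/T$ centered on the negative real axis. Since $0<\alpha<\beta$ forces $T\ge 2$, we have $\pi/T\le \pi/2$, so $\tan(\pi/T)\ge 0$ is well-defined and monotone. This cone consists of all complex numbers $a+jb$ such that either (i) $a<0$ and $|b|/(-a)\le \tan(\pi/T)$, or (ii) $a=0$ and $b=0$ (the degenerate tip). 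Setting $a=R(\omega)+1/k$ and $b=I(\omega)$ and using the sign from $R(\omega)+1/k<0$ to rewrite $|b|/(-a)=-|I(\omega)|/(R(\omega)+1/k)$, the cone membership is exactly the pair of inequalities in \eqref{eq:prop1cond}.

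Having verified that $\tilde{G}$ meets the phase hypothesis of Theorem \ref{th:1}, Theorem \ref{th:1} yields $\tilde\phi\in S_{0,\infty}$ with $0\in\tilde\phi(0)$ generating a non-trivial $T$-periodic solution for the Lurye system with $\tilde{G}$. Applying Lemma \ref{lem:looptr} in the reverse direction then produces the required $\phi\in S_{0,k}$ and completes the proof.

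The argument is essentially a one-step reduction; no genuine obstacle arises. The only bookkeeping is the sign manipulation when converting the cone condition from $|I(\omega)|/|R(\omega)+1/k|$ to $-|I(\omega)|/(R(\omega)+1/k)$, which is valid precisely because the first inequality of \eqref{eq:prop1cond} is imposed, and the observation that $T\ge 2$ keeps $\tan(\pi/T)$ in its monotone regime.
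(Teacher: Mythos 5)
Your proof is correct and follows essentially the same route as the paper: apply the loop transformation of Lemma~\ref{lem:looptr} to reduce to the monotone case, verify that \eqref{eq:prop1cond} is exactly the phase condition \eqref{eq:5} for $\tilde{G}=G+1/k$ at $\omega$, and invoke Theorem~\ref{th:1}. The only difference is that you spell out the cone-membership translation (including the sign manipulation and the check that $\pi/T<\pi/2$), which the paper's proof asserts without detail.
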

\begin{proof}
  If \eqref{eq:prop1cond} holds then $\tilde{G}:=G+1/k$ satisfies the
  phase conditon in \eqref{eq:5}. By Theorem~\ref{th:1}, there
  is a $\tilde{\phi}\in S_{0,\infty}$ such that the Lurye system of
  $(\tilde{G},\tilde{\phi})$ has a non-trivial solution.  This
  implies, by Lemma~\ref{lem:looptr}, that there is a
  $\phi\in S_{0,k}$ such that the Lurye system of $(G,\phi)$ has a
  non-trivial solution.
\end{proof}
The destabilizing nonlinearity with the smallest slope bound $\bar{k}$
is obtained when the second constraint in \eqref{eq:prop1cond} holds
with equality. Solving this equality for $\bar{k}$ yields:
\begin{align}
  \bar{k} =\frac{-\tan\left(\frac{\pi}{T}\right)}{R(\omega)\tan\left(\frac{\pi}{T}\right)+|I(\omega)|} 
\end{align}
If $G$ itself satisfies the phase condition in \eqref{eq:5} then
$\bar{k} \ge 0$. If $\bar{k}<0$ then no destabilizing nonlinearity
exists.  Finally, let $\{\tilde{y}_i,-u_i\}_{i=0}^{T-1}$ be the data
interpolated by $\tilde{\phi}$. The nonlinearity $\phi$ is obtained,
after loop transforming back, by interpolating
$\{\tilde{y}_i-u_i/k,-u_i\}_{i=0}^{T-1}$. The nonlinearity $\phi$ is
no longer multi-valued after the loop transformation.

\begin{proposition}\label{prop:2}
  Let $G\in\RH$ and integers $0<\alpha<\beta$ be given.  Assume
  $\alpha$ and $\beta$ are co-prime. Define the frequency
  $\omega:=\frac{\alpha \pi}{\beta}$ with corresponding period
  $T=2\beta$ if $\alpha$ is odd and $T=\beta$ if $\alpha$ is even.
  There is $\phi \in S_{0,k}^\text{odd}$ with $k<\infty$ such that the Lurye
  system has a non-trivial $T$-periodic solution if
  \begin{align}
    \label{eq:prop2cond}
    R(\omega)+\frac{1}{k}<0 \mbox{ and }
    \frac{ -|I(\omega)|}{R(\omega)+1/k} \le 
    \tan\left(\frac{\pi}{2\beta}\right) 
  \end{align}
  where $R(\omega)=\hbox{Re}\left\{G(e^{j\omega})\right\}$ and
  $I(\omega)=\hbox{Im}\left\{G(e^{j\omega})\right\}$.  
\end{proposition}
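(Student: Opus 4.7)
The plan is to mimic the proof of Proposition~\ref{prop:1} step by step, but invoke Theorem~\ref{th:1a} (the odd version) in place of Theorem~\ref{th:1}, and the odd branch of Lemma~\ref{lem:looptr}. First, I would introduce the loop-transformed plant $\tilde{G}:=G+1/k$ and recast the problem: by Lemma~\ref{lem:looptr}, producing a periodic solution to the Lurye system $(G,\phi)$ with $\phi \in S_{0,k}^{\text{odd}}$ is equivalent to producing one for $(\tilde{G},\tilde{\phi})$ with $\tilde{\phi}\in S_{0,\infty}^{\text{odd}}$. So it suffices to verify that $\tilde{G}(e^{j\omega})$ satisfies the phase condition~\eqref{eq:5a} of Theorem~\ref{th:1a}.

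Next, I would translate the two inequalities in~\eqref{eq:prop2cond} into this phase condition. Writing $\tilde{G}(e^{j\omega}) = (R(\omega)+1/k) + j I(\omega)$, the first inequality $R(\omega)+1/k<0$ places $\tilde{G}(e^{j\omega})$ in the open left half-plane, so $\angle \tilde{G}(e^{j\omega})$ lies in $(\pi/2, 3\pi/2)$. The angular deviation $\delta := \angle \tilde{G}(e^{j\omega}) - \pi$ then satisfies
\begin{equation*}
  |\tan \delta| = \frac{|I(\omega)|}{|R(\omega)+1/k|} = \frac{-|I(\omega)|}{R(\omega)+1/k},
\end{equation*}
and $|\delta|<\pi/2$. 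Since $\tan$ is strictly increasing on $(-\pi/2,\pi/2)$, the second inequality in~\eqref{eq:prop2cond} is equivalent to $|\delta|\le \pi/(2\beta)$, which is exactly the condition~\eqref{eq:5a} applied to $\tilde{G}$.

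Having verified the hypotheses of Theorem~\ref{th:1a}, I would conclude that there exists $\tilde{\phi}\in S_{0,\infty}^{\text{odd}}$ such that the Lurye system $(\tilde{G},\tilde{\phi})$ admits a non-trivial $T$-periodic solution. Applying the odd case of Lemma~\ref{lem:looptr} in reverse yields the desired $\phi \in S_{0,k}^{\text{odd}}$, and the period $T$ is preserved under the loop transformation since the transformation only adds the static gain $1/k$ and does not alter the periodic structure.

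The main obstacle is essentially bookkeeping rather than any deep argument: one must check carefully that the translation from the real/imaginary-part inequalities to the phase inequality is a genuine equivalence (in particular, that the sign conventions and the odd/even parity of $\alpha$ do not interfere when going through the loop transformation), and that Lemma~\ref{lem:looptr} indeed preserves oddness of the nonlinearity in both directions. Once these are in hand, the proof is a direct copy of Proposition~\ref{prop:1} with the bound $\pi/T$ replaced by $\pi/(2\beta)$ throughout.
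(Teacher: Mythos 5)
Your proposal is correct and is exactly the argument the paper intends: the paper omits the proof of Proposition~2, stating only that it is ``similar to that given for Proposition~1,'' and your route---loop transformation via Lemma~\ref{lem:looptr} (odd case), verification that \eqref{eq:prop2cond} yields the phase condition \eqref{eq:5a} for $\tilde{G}=G+1/k$, and invocation of Theorem~\ref{th:1a} in place of Theorem~\ref{th:1}---is precisely that adaptation. Your explicit translation of the two inequalities into $|\delta|\le \pi/(2\beta)$ (using $R(\omega)+1/k<0$ to place $\angle\tilde{G}(e^{j\omega})$ in the left half-plane and the monotonicity of $\tan$ on $(-\pi/2,\pi/2)$, valid since $\beta\ge 2$) is a correct filling-in of a step the paper leaves implicit even in Proposition~\ref{prop:1}.
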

\vspace{0.05in}

The proof is similar to that given for Proposition~\ref{prop:1} and is
omitted.  Moreover, we can solve for the smallest $\bar{k}^\text{odd}$
for which there is a destabilizing $\phi \in S^\text{odd}_{0,k}$.

\section{Discussion on the Kalman Conjecture}\label{Sec:Kal}

The constructed nonlinearity is valid for each ($\alpha$,$\beta$)
where the phase condition is satisfied at the frequency
$\omega=\frac{\alpha\pi}{\beta}$. This provides an upper bound
$\bar{k}$ on the stability boundary $k_{AS}$ for the absolute
stability problem.  The Nyquist gain provides an alternative upper
bound using the class of linear gains.
\begin{definition}[Nyquist gain]
  The Nyquist gain of $G\in\RH$, denoted $k_N$, is the supremum of the
  set of gains $k$ such that the feedback interconnection between $G$
  and $K$ is stable for all $K\in[0,k]$.
\end{definition}
\vspace{0.05in} 
The constructed nonlinearity only provides new information if
$\bar{k}<k_N$. To clarify further, recall the Discrete-Time Kalman 
Conjecture (DTKC) is that $k_N=k_{AS}$ as stated next.
\begin{conjecture}[DTKC~\cite{Kalman:1957,Carrasco:15}]
  The Lurye system with $G$ and any $\phi\in S_{0,k}$ is stable if and
  only $k<k_N$.
\end{conjecture}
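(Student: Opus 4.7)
The conjecture is known to be false (see \cite{Carrasco:15,Heath:15}), and the apparatus developed in Sections~\ref{sec:mainS0inf}--\ref{sec:mainS0k} is precisely a recipe for refuting it. The plan is therefore to exhibit a plant $G\in\RH$ together with a nonlinearity $\phi\in S_{0,k}$ satisfying $k<k_N$ such that the Lurye system of $(G,\phi)$ admits a non-trivial $T$-periodic solution; any such $(G,\phi)$ serves as a counterexample and thereby disproves the DTKC.

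First, given a candidate $G$, I would compute $k_N$ directly from the Nyquist locus of $G(e^{j\omega})$. Second, I would sweep over co-prime pairs $(\alpha,\beta)$ with $0<\alpha<\beta$ and evaluate $R(\omega)$ and $I(\omega)$ at $\omega=\alpha\pi/\beta$, computing the slope bound
\[
\bar k(\alpha,\beta)=\frac{-\tan(\pi/T)}{R(\omega)\tan(\pi/T)+|I(\omega)|},
\]
keeping only those pairs for which $R(\omega)+1/\bar k(\alpha,\beta)<0$ and $\bar k(\alpha,\beta)>0$ so that the hypotheses of Proposition~\ref{prop:1} hold. For each admissible pair, Proposition~\ref{prop:1} delivers an explicit $\phi\in S_{0,\bar k(\alpha,\beta)}$ closing a $T$-periodic cycle around $G$. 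Taking $\bar k^{\star}:=\min_{(\alpha,\beta)}\bar k(\alpha,\beta)$ produces the tightest upper bound on $k_{AS}$ obtainable from this construction.

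Third, I would verify the strict inequality $\bar k^{\star}<k_N$ for the chosen $G$. Whenever this holds, the nonlinearity returned by Proposition~\ref{prop:1} lies in $S_{0,\bar k^{\star}}\subseteq S_{0,k_N}$, yet destabilizes the loop by producing a persistent periodic orbit; this directly contradicts the DTKC claim that stability for every $\phi\in S_{0,k}$ follows from $k<k_N$.

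The main obstacle is the plant design itself. One must construct $G$ whose phase enters the narrow window $[\pi-\pi/T,\,\pi+\pi/T]$ at a rational frequency $\omega=\alpha\pi/\beta$, while keeping the Nyquist locus otherwise benign so that $k_N$ is not pushed down to the level of $\bar k^{\star}$. This requires simultaneous shaping of phase and magnitude response, typically using higher-order transfer functions, together with a joint search over $(\alpha,\beta)$ and the plant parameters; the analogous continuous-time problem (see \cite{Fitts:66,Barabonov:88,bragin:2011,Leonov:2013}) confirms that the gap $k_N-\bar k^{\star}$ is delicate and numerically ill-conditioned.
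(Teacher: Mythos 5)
You have correctly recognized that this statement is a conjecture the paper does not prove but refutes, and your refutation strategy --- sweep co-prime pairs $(\alpha,\beta)$, compute $\bar k(\alpha,\beta)$ from Proposition~\ref{prop:1}, and check $\bar k^{\star}<k_N$ --- is exactly the paper's own route in Section~\ref{Sec:Kal} and the numerical examples. The one substantive shortfall is that you stop at a plan and frame the plant design as a delicate open obstacle, whereas the paper completes the argument with a concrete plant: $G(z)=z/(z^2-1.8z+0.81)$ from \cite{Carrasco:15,Heath:15}, for which $k_N=3.61$ while the construction at $(\alpha,\beta)=(2,7)$ yields a destabilizing $\phi\in S_{0,\bar k}$ with $\bar k=1.3028373$, and $(\alpha,\beta)=(1,3)$ yields an odd counterexample with $\bar k^{\text{odd}}=1.3575410$. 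Your worry that shaping $G$ requires higher-order transfer functions and is numerically ill-conditioned (by analogy with the continuous-time literature) is overstated in discrete time: a simple second-order plant suffices, and the paper notes that all six benchmark examples of \cite{Zhang:20} produce counterexamples with $\bar k<k_N$. To turn your proposal into a finished refutation you need only instantiate your step three with this plant rather than leaving existence of a suitable $G$ hypothetical.
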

\vspace{0.05in} 
Our nonlinear construction does not provide any valuable information beyond the Nyquist value for plants where $k_{ZF}\simeq k_N$. However, as DTKC is false in general~\cite{Heath:15}, the Nyquist gain is a conservative upper bound. Our construction becomes relevant for the plants used in absolute stability literature such as the examples in~\cite{Carrasco:20}, where there is a significant gap between $k_{ZF}$ and $k_N$ (see Tables~I and~II in~\cite{Zhang:20}). For all the six examples in~\cite{Zhang:20}, our construction leads to counterexamples of the DTKC, i.e. $\bar{k}<k_N$ (see Table~III in~\cite{Zhang:20}).

\section{Numerical examples}

\subsection{Example  with $\alpha$ odd and  $k=\infty$}

To illustrate the main results, first consider artificially
constructed plants. Let $\alpha=1$ and $\beta=5$ so that
$\omega=\pi/5$. The periodic solutions have period $T=10$. 
Consider a plant $G_1$ with $G_1(e^{j\omega})=-e^{j\frac{\pi}{25}}$.
This plant satisfies the phase condition in Equation~\ref{eq:5} of
Theorem~\ref{th:1}. The input and output of $G_1$ are
$U_T = Re\{V_T\}$ and $Y_T = Re\{ G_1(e^{j\omega}) V_T\}$ where:
\begin{align*}
V_T & :=\begin{bmatrix}
1 & e^{j\frac{\pi}{5}} & e^{j\frac{2\pi}{5}}& e^{j\frac{3\pi}{5}} & 
\cdots & e^{j\frac{7\pi}{5}} & e^{j\frac{8\pi}{5}}& e^{j\frac{9\pi}{5}} 
\end{bmatrix}^\top \\
& :=\begin{bmatrix}
1 & e^{j\frac{\pi}{5}} & e^{j\frac{2\pi}{5}}& e^{j\frac{3\pi}{5}} & 
\cdots & e^{-j\frac{3\pi}{5}} & e^{-j\frac{2\pi}{5}} & e^{-j\frac{\pi}{5}}
\end{bmatrix}^\top 
\end{align*}
Figure~\ref{fig:4} plots the vectors $V_T$ (red) and
$G_1(e^{j\omega})V_T$ (blue) in the complex plane.  The projection of
these points onto the real axis corresponds with the input-output data
$U_T$ and $Y_T$.  In this example $\alpha$ is odd. Note that the
points in $V_T$ (i) are equidistantly spaced around the unit circle,
(ii) are symmetric about both the real and imaginary axis, (iii) and
there is a rotational symmetry of $\pi$.  These are the key
properties claimed in Theorem~\ref{th:1}. The points in  
$G_1(e^{j\omega})V_T$ are shifted slightly counterclockwise.
Figure~\ref{fig:5} shows the interpolated function (blue)
obtained from $(Y_T,-U_T)$ using Equation~\ref{eq:phiinterp}.
This function is odd and passes through $\phi(0)=0$.

Next consider a plant $G_2$ with
$G_2(e^{j\omega})=-e^{j\frac{\pi}{10}}$ and the same
$(\alpha,\beta,\omega)$ as given above.  This plant satisfies the
phase condition in Equation~\ref{eq:5} but with equality, i.e. the
phase condition is tight.  The input and output of $G_2$ are
$U_T = Re\{V_T\}$ and $Y_T = Re\{ G_2(e^{j\omega}) V_T\}$ where $V_T$
is the same as above. Figure~\ref{fig:4} plots the vectors $V_T$ (red)
and $G_2(e^{j\omega})V_T$ (green) in the complex plane.  The
projection of these points onto the real axis corresponds with the
input-output data $U_T$ and $Y_T$. Note that the green data has points
of the form $(a \pm j b)$ for some $(a,b)$. Projecting these points to
the real axis results in repeats in the entries of $Y_T$. As a result,
the interpolation $\phi$ is multivalued with a stair-step shape as
shown in Figure~\ref{fig:5}.

\begin{figure}
  \centering
  \includegraphics[width=0.88\linewidth]{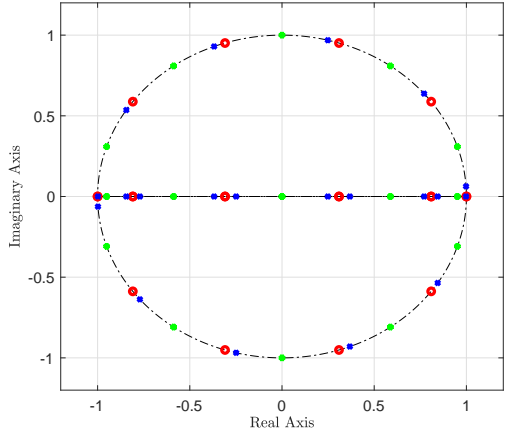}
  \caption{Points $V_T$, $G_1(e^{j\omega})V_T$, and $G_2(e^{j\omega})V_T$
    in the complex plane.}
  \label{fig:4}
  \vskip-5mm
\end{figure}

\begin{figure}
  \centering
  \includegraphics[width=0.88\linewidth]{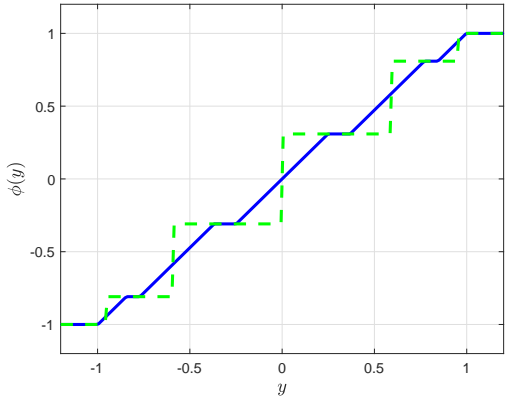}
  \caption{Interpolated nonlinearities for $G_1$ (blue) and
    $G_2$ (green).}
  \label{fig:5}
    \vskip-7mm
\end{figure}

\subsection{Example  with $\alpha$ even  and  $k=\infty$}

Let $\alpha=2$, $\beta=3$, hence $\omega=2\pi/3$. The periodic
solutions have period $T=3$.  Consider two different plants:
e.g. $G_1(e^{j\omega})=-e^{j\frac{\pi}{6}}$ and
$G_2(e^{j\omega})=-e^{j\frac{\pi}{3}}$. The input and outputs 
are $U_T = Re\{V_T\}$ and $Y_T = Re\{ G_i(e^{j\omega}) V_T\}$ 
$(i=1,2)$ where:
\begin{align*}
V_T& :=\begin{bmatrix}
1 & e^{j\frac{2\pi}{3}} & e^{-j\frac{2\pi}{3}}
\end{bmatrix}^\top \\
G_1(e^{j\omega}) V_T & =\begin{bmatrix}
e^{-j\frac{5\pi}{6}} & e^{-j\frac{\pi}{6}} & e^{j\frac{\pi}{2}} 
\end{bmatrix}^\top \\ 
G_2(e^{j\omega}) V_T & =\begin{bmatrix}
e^{-j\frac{2\pi}{3}} & 1 & e^{-j\frac{2\pi}{3}} 
\end{bmatrix}^\top.
\end{align*}

In this example we illustrate the interpolated nonlinearities. If we
consider the set $S_{0,\infty}$, we see that $G_1$ is not a limiting
case since it has finite slope, whereas $G_2$ is a limiting case as it
is multi-valued, see Figure~\ref{fig:6}. Moreover, the interpolated
nonlinearity is non-odd and it requires a shifting as explained in the
proof to obtain $\phi(0)=0$.  This shifting procedure is demonstrainted
in the following section.

On the other hand, if we reduce our attention to $S_{0,\infty}^\text{odd}$, by ensuring oddness, $G_1$ becomes a limiting case as it is multi-valued, see Figure~\ref{fig:7}. In addition, the required odd nonlinearity for $G_2$ is not monotone. However, it does not contradict Theorem~\ref{th:1a}, as condition~\ref{eq:5a} is not satisfied for $G_2$.

\begin{figure}
	\centering
	\includegraphics[width=0.88\linewidth]{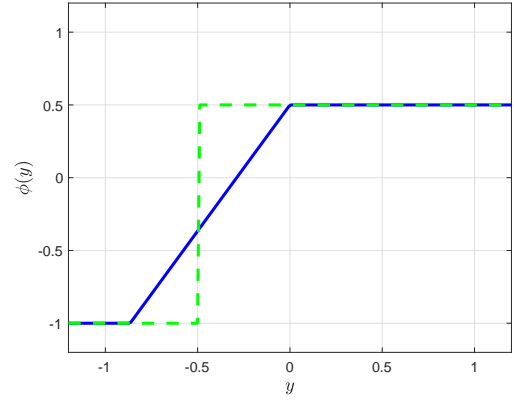}
	\caption{Interpolated nonlinearities required for the Lurye system to have a periodic behaviour for $G_1$ and $G_2$. As the pair of points are nonodd, the interpolated nonlinearities are nonodd and do not cross the origin.}
	\label{fig:6}
	\vskip-5mm
\end{figure}

\begin{figure}
	\centering
	\includegraphics[width=0.88\linewidth]{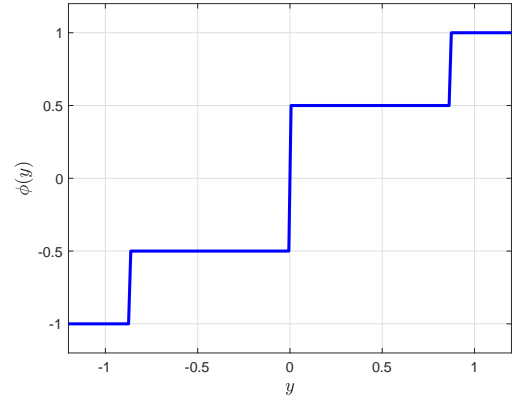}
	\caption{Interpolated nonlinearities required for the Lurye system to have a periodic behaviour for $G_1$. In this case, $G_1$ is multi-values and we cannot ensure the existence of a nonlinearity within $S_{0,\infty}^\text{odd}$ resulting in a periodic behaviour. For $G_2$, the interpolated nonlinearity does not belong to $S^\text{odd}_{0,\infty}$.}
	\label{fig:7}
	\vskip-5mm
\end{figure}

\subsection{Examples with $k<\infty$}

Consider the following system:
\begin{equation}\label{eq:10}
G(z)=\frac{z}{z^2-1.8z+0.81}.
\end{equation}
This plant has been used in~\cite{Carrasco:15,Heath:15} as a
second-order counterexample of the discrete-time Kalman Conjecture.
The feedback interconnection of $G$ and a (linear) gain $k$ is stable
if $0\leq k<3.61$. A 4-periodic cycle was constructed for a
slope-restricted nonlinearity with maximum slope $k=2.1$.

As mentioned in the introduction, Zames-Falb multipliers can be used to compute a lower bound
on $k_{AS}$.  Using the convex search in
\cite{Carrasco:20} yields multipliers that guarantee the stability for
all $\phi\in S_{0,\underline{k}_1}$ with $\underline{k}_1 = 1.3028317$
and for all $\phi\in S^{odd}_{0,\underline{k}_2}$ with
$\underline{k}_2 = 1.3511322$. We use the results in this paper
to construct destabilizing nonlinearities.  This construction
provides an upper bound $\bar{k} \ge k_{AS}$. For this plant
the upper bounds are close to the Zames-Falb lower bounds and hence
the conservatism in either bound is small.

First consider the class of non-odd nonlinearities.  Apply
Proposition~\ref{prop:1} using a large combination of values for
$\alpha$ and $\beta$.  We find that the minimum value of $\bar{k}$ is
obtained for $\alpha=2$ and $\beta=7$. For these values,
Proposition~\ref{prop:1} ensures periodic behaviour for all
$k\ge1.3028373$. The required nonlinearity is depicted in
Figure~\ref{fig:3}. To obtain this nonlinearity, we have to use
Equation~\eqref{eq:2}. For this particular plant, the DC gain of the
loop transformed plant is $\tilde{G}(1)=100+1/\bar{k}\simeq 100.7676$
and the shifting constant is $\xi=1.5985\times 10^{-3}$.

%As an illustration of the shifting procedure, Figure~\ref{fig:2} is included. 

\begin{figure}
  \centering
  \includegraphics[width=0.88\linewidth]{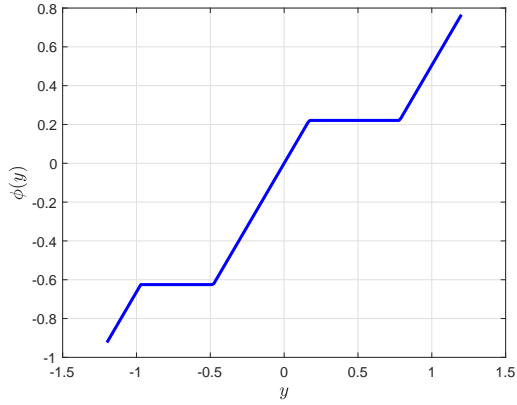}
  \caption{Interpolated nonlinearity $\phi\in S_{0,1.3028373}$.
    The Lurye system with $\phi$ and $G$ in \eqref{eq:10}
    exhibits a periodic behaviour.}
  \label{fig:3}
  \vskip-5mm
\end{figure}

%\begin{figure}
%	\centering
%	\includegraphics[width=1\linewidth]{nonlinearity6}
%	\caption{In blue, the interpolated staircase function obtained by using the real part of the eigenvector $V_2$ as input. In black, the line connecting the points $(0,0)$ and $(G(1),-1)$. The intersection provides the value $\xi=1.5985\times 10^{-3}$.}
%	\label{fig:2}
%	\vskip-3mm
%\end{figure}

Next consider the class of odd nonlinearities.  Apply
Proposition~\ref{prop:2} for a large combination of values for
$\alpha$ and $\beta$. We find that the minimum value of
$\bar{k}^\text{odd}$ is obtained for $\alpha=1$ and $\beta=3$. Then, $\omega=\frac{\pi}{3}$ and $T=6$. For these values,
Proposition~\ref{prop:1} ensures periodic behaviour for all
$k\geq1.3575410$. The required nonlinearity is depicted in
Figure~\ref{fig:8}. 

\begin{figure}
  \centering
  \includegraphics[width=0.88\linewidth]{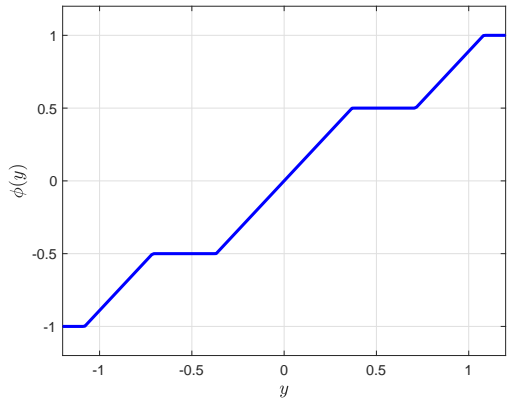}
  \caption{Interpolated nonlinearity $\phi\in S^\text{odd}_{0,1.3575410}$. The Lurye system with $\phi$ and $G$ in \eqref{eq:10} exhibits a periodic behaviour.}
  \label{fig:8}
  \vskip-5mm  
\end{figure}

\section{Conclusions}
This paper shows the connection between frequency-domain duality
conditions for Zames-Falb multipliers developed in~\cite{Zhang:20} and
periodic behaviour of the Lurye system for slope-restricted
nonlinearity. We develop an analytical construction for destabilizing
nonlinearities. For all examples in~\cite{Carrasco:20}, the construction yields systematic counterexamples
of the discrete-time Kalman conjecture, and therefore less conservative upper bounds for absolute stability.

\bibliographystyle{IEEEtran}
\bibliography{references} 
 
\end{document}